\newtheorem{theorem}{Theorem}[section]
\newtheorem{lemma}[theorem]{Lemma}
\newtheorem{proposition}[theorem]{Proposition}
\newtheorem{definition}{Definition}[section]
\theoremstyle{remark}
\newtheorem{remark}{Remark}[section]
\newcommand{\R}[0]{\mathbb{R}}
\DeclareMathOperator{\ran}{\operatorname{\textnormal{ran}}}
\newcommand{\DECd}{\textnormal{d}^\textnormal{DEC}}
\newcommand{\DECD}{\textnormal{D}^\textnormal{DEC}}
\newcommand{\DD}{\textnormal{D}}
\newcommand{\DECdelta}{\delta^\textnormal{DEC}}
\newcommand{\inner}[2]{\left\langle #1, #2 \right\rangle}
\newcommand{\innerc}[2]{\left\llbracket #1, #2 \right\rrbracket}
\newcommand{\normm}[1]{{\left\vert\kern-0.25ex\left\vert\kern-0.25ex\left\vert #1 
    \right\vert\kern-0.25ex\right\vert\kern-0.25ex\right\vert}}
\newcommand{\LambdaForms}[1][]{{\Lambda^{#1}(\Omega)}}
\newcommand{\ccstar}{
    \tikz[scale=0.1,baseline=-0.25em]{
        \draw[line width=0.5pt, join=round]
          (90:1) -- (234:1) -- (18:1) -- (162:1) -- (306:1) -- cycle;
    }
}
\newcommand{\rev}[1]{#1}
\crefname{theorem}{theorem}{theorems}
\Crefname{theorem}{Theorem}{Theorems}
\crefname{lemma}{lemma}{lemmas}
\Crefname{lemma}{Lemma}{Lemmas}
\crefname{corollary}{corollary}{corollaries}
\Crefname{corollary}{Corollary}{Corollaries}
\crefname{proposition}{proposition}{propositions}
\Crefname{proposition}{Proposition}{Propositions}
\crefname{claim}{claim}{claims}
\Crefname{claim}{Claim}{Claims}
\crefname{definition}{definition}{definitions}
\Crefname{definition}{Definition}{Definitions}
\crefname{example}{example}{examples}
\Crefname{example}{Example}{Examples}
\crefname{remark}{remark}{remarks}
\Crefname{remark}{Remark}{Remarks}
\crefname{assumption}{assumption}{assumptions}
\Crefname{assumption}{Assumption}{Assumptions}
\crefname{Rule}{rule}{rules}
\Crefname{Rule}{Rule}{Rules}
\newcommand{\ipop}{\rev{\mathcal{R}}}
\title{Convergence of Discrete Exterior Calculus for the Hodge-Dirac Operator}
\author{Radovan Dabeti\'c\thanks{D-MATH, ETH Zurich, CH-8092 Zürich, Switzerland,
    \texttt{rdabetic@ethz.ch}} \and
  Ralf Hiptmair\thanks{SAM, D-MATH, ETH Zurich, CH-8092 Zürich, Switzerland,
    \texttt{hiptmair@sam.math.ethz.ch}}} 
\begin{document}
\maketitle

\begin{abstract}
  A short proof of convergence for the discretization of the Hodge-Dirac operator in the
  framework of discrete exterior calculus (DEC) is provided using the techniques established
  in $[$\textsc{Johnny Guzmán and Pratyush Potu}, {\em A Framework for Analysis of DEC
    Approximations to Hodge-Laplacian Problems using Generalized Whitney Forms},
  \href{https://arxiv.org/abs/2505.08934}{arXiv Preprint 2505.08934}, 2025$]$.
\end{abstract}
{\small \textbf{Keywords}: Discrete Exterior Calculus, Hodge-Dirac Operator, Boundary Value Problem, Dual Meshes, Cochains}\\
{\small \textbf{Mathematics Subject Classification}: 65N12, 65N15, 58A10}

\section{Introduction}
\label{sec:intro}

There are two fundamental operators associated with $L^{2}$-de Rham complexes, the
Hodge-Laplacian and the Hodge-Dirac operator. \rev{In this work we are concerned with
  homogeneous boundary value problems for the latter on bounded domains
  $\Omega$ of Euclidean space $\mathbb{R}^{n}$. Similar boundary value problems occur
  in models of particle physics like, for instance, the MIT bag model \cite{JOH75,ALR16}
  for ${n=3}$, and as cell problems in electronic structure models for graphene
  \cite{CGP09} for $n=2$. Refer to the \cite[Section~1]{BHM20} for more references.}

\rev{Given a triangulation of the domain $\Omega$}, there are two fundamental ways to
discretize de Rham complexes. The first is a finite-element approach employing discrete
differential forms known as finite element exterior calculus (FEEC). The second has the
flavor of a finite-volume technique and is called discrete exterior calculus (DEC).

A comprehensive a priori convergence theory for the DEC approximation of Hodge-Laplacians
has recently been achieved by J.~Guzmán and P.~Potu in the breakthrough work
\cite{GUP25}. The present paper harnesses the novel techniques from \cite{GUP25} to
establish the convergence of the DEC discretization of the Hodge-Dirac operator associated
with the $L^{2}$ de Rham Hilbert complex on a bounded domain of ${n}$-dimensional
Euclidean space. This work supplements \cite{GUP25}, from where we borrow the bulk of our
notation, often without defining it again. \rev{Note that, however, the analysis presented
  in this paper is primarily conducted from the perspective of cochains, whereas the one
  in \cite{GUP25} more heavily relies on Whitney forms, lowest order discrete differential
  forms.}  We also refer to \cite{GUP25} for background information on FEEC and DEC and a
discussion of pertinent literature. The reader is advised to study \cite{GUP25} before
reading the present paper.

\section{The Hodge-Dirac Operator}
\label{sec:dirac}

Let $\Omega\subset\R^n$ be a bounded, Lipschitz, polytopal, and \emph{topologically trivial}
domain and we write $\LambdaForms[k]$ for the space of smooth $k$-forms thereon.
As in \cite{GUP25} the exterior derivative operators are denoted by
$d^k: \LambdaForms[k]\to\LambdaForms[k + 1]$, $0\leq k<n$, the (Euclidean) Hodge star
operators by $\star_k$ and the codifferential operators by
$\delta_k := (-1)^k\star_{k - 1}^{-1} d^{n-k}\star_k: \LambdaForms[k]\to\LambdaForms[k -
1], k = 1, \dots, n$. The Hodge star operators induce inner products on $\LambdaForms[k]$.
\begin{definition}
  \label{def:1}
  The $L^2$ inner product on two $k$-forms $\omega$ and $\mu$ is given by 
  \begin{gather*}
    \inner{\omega}{\mu}_{L^2\LambdaForms[k]} := \int_\Omega\omega\wedge\star\mu.
  \end{gather*}
\end{definition}

We denote by $\LambdaForms := \bigoplus_{k = 0}^n \LambdaForms[k]$ the exterior
algebra of (smooth) differential forms on $\Omega$ and write 
\begin{gather}
  \label{ddd}
  \dd :=
  \begin{pmatrix}
    0 &  & \\
    d^0 & 0 & \\
    & d^1 & 0 & \\
    && \ddots & \ddots
  \end{pmatrix}, \quad \delta :=
  \begin{pmatrix}
    0 & \delta_1 & \\
    & 0 & \delta_2 \\
    & & 0 & \ddots \\
    && & \ddots
  \end{pmatrix}
\end{gather}
for the exterior derivative and codifferential on $\LambdaForms$. We equip
$\LambdaForms$ with the natural Hilbert space structure by combining the inner products
from \Cref{def:1}. For $\mathfrak{u} \equiv (u_0, \dots, u_n), \mathfrak{v} \equiv (v_0, \dots, v_n)\in\LambdaForms$
we set
\begin{gather*}
  \inner{\mathfrak{u}}{\mathfrak{v}}_{L^2\Lambda} := \sum_{k = 0}^n
  \inner{u_k}{v_k}_{L^2\Lambda}\;.
\end{gather*}
Write $L^2\LambdaForms:= \bigoplus_{k = 0}^n L^2\LambdaForms[k]$, where
$L^2\LambdaForms[k]$ is the space of square-integrable $k$-forms, i.e.\ $k$-forms with
coefficients in $L^2(\Omega)$.

Also refer to \cite[Section 6.2.6]{Arnold2018}, where Sobolev spaces of differential forms
are introduced.  Let
\begin{gather*}
  H\LambdaForms := \left\{ \mathfrak{u}\in L^2\LambdaForms: \dd\mathfrak{u}\in
    L^2\LambdaForms\right\}\;,
\end{gather*}
and define
$\mathring{V} := \bigoplus_{k = 0}^{n-1}\mathring{H}\LambdaForms[k] \oplus
L^2_*\LambdaForms[n]$, where
\begin{gather}
  L^2_*\LambdaForms[n] := \left\{v\in L^2\LambdaForms[n]:
  \int_\Omega v = 0 \right\}\;,
\end{gather}
and $\mathring{H}\LambdaForms[k]$ is the
space of functions in $H\LambdaForms[k]$ with vanishing trace on $\partial\Omega$,
see \cite[Section 6.2.6]{Arnold2018}. Also let $H^*\LambdaForms$ be the domain of
$\delta$, see also \cite[Section 6.2.6]{Arnold2018}.

\begin{definition}
  \label{def:D}
  The Hodge-Dirac operator is $\DD := \dd + \delta$ with domain of definition 
  $\mathcal{D}(\DD) := \mathring{H}\Lambda(\Omega)\cap H^*\Lambda(\Omega)$, where the domain of $\dd$ is
  $\mathring{H}\Lambda(\Omega)$ and that of $\delta$ is $H^*\Lambda(\Omega)$. 
\end{definition}

Taking the cue from \cite{Leopardi2016} we put the focus on the following boundary
value problem\footnote{Note that the boundary conditions are included implicitly in the
  domain of the operator.} for the Dirac operator: Given \rev{ $\mathfrak{f} = (f_0, \dots, f_n) \in L^2\LambdaForms$}, seek
$\mathfrak{u}\in\mathcal{D}(\DD)\cap(\ker\DD)^\perp, \mathfrak{p}\in\ker\DD$ such that
\begin{equation}\label{eq:strong_form}
  \DD\mathfrak{u} + \mathfrak{p} = \mathfrak{f}.
\end{equation}
Corollary 8 of \cite{Leopardi2016} tells us the well-posedness of the following weak form of
\eqref{eq:strong_form}: Given $\mathfrak{f}\in L^2\LambdaForms$, seek
$\mathfrak{u}\in\mathring{H}\LambdaForms, \mathfrak{p}\in\ker\DD$ such that
\begin{align}\label{eq:var}
  \begin{aligned}
    \inner{\dd\mathfrak{u}}{\mathfrak{v}}_{L^2\Lambda} +
    \inner{\mathfrak{u}}{\dd\mathfrak{v}}_{L^2\Lambda} +
    \inner{\mathfrak{p}}{\mathfrak{v}}_{L^2\Lambda} &
    = \inner{\mathfrak{f}}{\mathfrak{v}}_{L^2\Lambda} &&
    \forall\mathfrak{v}\in\mathring{H}\Lambda(\Omega), \\
    \inner{\mathfrak{u}}{\mathfrak{v}}_{L^2\Lambda} & = 0 &&
    \forall\mathfrak{v}\in\ker\DD.
  \end{aligned}
\end{align}
As we are working with a domain with trivial topology, $\ker\DD$ (the space of
harmonic forms) is trivial (see \cite[Section 4.3]{Arnold2018} for more information)
except for constant $n$-forms, i.e.\ $\ker\DD|_{\mathring{V}} = \{0\}$, so that we can
consider the following simpler problem: Given $\mathfrak{f}\in L^2\LambdaForms$ with
$\int_\Omega f_n = 0$\footnote{Or given the general case, we can recover
  $\mathfrak{p}$ by taking the mean of the $n$-form in $\mathfrak{f}$ and then
  subtract the mean to get a suitable right-hand side.}, seek
$\mathfrak{u}\in\mathring{V}$ such that
\begin{gather}
  \label{eq:3}
  \inner{\dd\mathfrak{u}}{\mathfrak{v}}_{L^2\Lambda} +
  \inner{\mathfrak{u}}{\dd\mathfrak{v}}_{L^2\Lambda} =
  \inner{\mathfrak{f}}{\mathfrak{v}}_{L^2\Lambda}
  \quad\forall\mathfrak{v}\in\mathring{V}.
\end{gather}

\rev{\begin{remark}
  \label{rem:dopf}
  For $n=3$ in physics the Dirac operator is usually expressed by means of Pauli matrices as a
  differential operator acting on functions $\Omega\to\mathbb{C}^{4}$. Identifying $\mathbb{C}^{4}$ with
  $\mathbb{R}^{8}$ in a suitable way, this is exactly the differential operator of
  \Cref{def:D} as is explained in \cite[Section~2]{CHR18} and \cite[Section~1.1]{FRI23}.
\end{remark}}

\section{DEC Discretization of the Hodge-Dirac Operator}

In order to discretize \eqref{eq:3} we rely on an oriented \emph{well-centered} simplicial
mesh\footnote{See \cite{Christiansen2011} for more information on cellular complexes.}
$\mathcal{T}$ of $\Omega$, that is, as stipulated by \cite[Definition 2.4.3]{Hirani2003}
the circumcenter of any simplex of $\mathcal{T}$ lies in its interior. Write
$\mathcal{T}^k$ for the $k$-cells of $\mathcal{T}$ and $C^k(\mathcal{T})$ the $k$-cochains
on $\mathcal{T}$. Furthermore, let $\tilde{\mathcal{T}}$ designate the (orthogonal)
\emph{dual mesh} of $\mathcal{T}$ and let $*\sigma\in\tilde{\mathcal{T}}$ for
$\sigma\in\mathcal{T}^k$ denote the dual cell of $\sigma$, see \cite[Section 3.2]{GUP25}.
\rev{Furthermore, as in \cite[Section~5.2]{GUP25}, we will have to make some assumptions
  on the dual mesh, namely, the dual mesh should be shape-regular in an
  appropriate sense, see \cite[Section~5.2]{GUP25}, and \cite[Assumption~5.5]{GUP25}
  should hold.}

We denote by $\partial^k: C^k(\mathcal{T}) \to C^{k+1}(\mathcal{T})$ the coboundary
operator, with respect to the coordinate basis $C^k(\mathcal{T})$ and
$C^{k+1}(\mathcal{T})$ represented by the incidence matrix of oriented $k$ and
$(k+1)$-cells. Let $\ccstar_{k}: C^k(\mathcal{T}) \to C^{n-k}(\tilde{\mathcal{T}})$,
$k = 0, \dots, n$, be the customary discrete Hodge stars introduced in discrete exterior
calculus, see \cite[Definition 3.6]{GUP25}. As in \cite[Definition 3.6]{GUP25}, let
$\ccstar_k^{-1}$ denote its inverse .

\begin{definition}[{\cite[Section 4.5]{GUP25}}]\label{def:dec_inner} For $\mathsf{u}, \mathsf{v} \in C^k(\mathcal{T})$,
  define the inner product
  \begin{gather}
    \label{eq:ipc}
    \innerc{\mathsf{u}}{\mathsf{v}}_k :=
    \sum_{\sigma\in\mathcal{T}^k}
    \frac{|*\sigma|}{|\sigma|}\inner{\mathsf{u}}{\sigma}\inner{\mathsf{v}}{\sigma}, 
  \end{gather}
  where $\inner{\mathsf{u}}{\sigma} \equiv \mathsf{u}(\sigma)$ is the duality pairing of
  $C^{k}(\mathcal{T})$ and $\mathcal{T}^{k}$. We denote the
  norm induced by this inner product by $\normm{\cdot}_k$. Let
  $C(\mathcal{T}) := \bigoplus_k C^k(\mathcal{T})$ and define an inner product thereon
  for $\mathbf{u}, \mathbf{v}\in C(\mathcal{T})$ by
  \begin{gather*}\innerc{\mathbf{u}}{\mathbf{v}} := \sum_{k = 0}^n \innerc{\mathsf{u}_k}{\mathsf{v}_k}_k, \end{gather*} where
  $\mathbf{u} = (\mathsf{u}_0, \dots, \mathsf{u}_n), \mathsf{u}_k\in C^k(\mathcal{T})$. For the induced norm we
  write $\normm{\cdot}$.
\end{definition}

\begin{definition}[{Discrete Codifferential, \cite[Definition 3.7, Definition 4.3]{GUP25}}]
  The discrete codifferential
  $\delta^\textnormal{DEC}_{k+1}: C^{k+1}(\mathcal{T})\to C^k(\mathcal{T})$ is defined
  as
  \begin{gather*}
    \delta^\textnormal{DEC}_{k + 1} := (-1)^{k+1} \ccstar_{k}^{-1} \tilde{\partial}^{n - (k + 1)}\ccstar_{k + 1}, \quad k = 0, \dots, n - 1,
  \end{gather*} where $\tilde{\partial}^{n - (k + 1)}$ denotes the coboundary operator on
  $C^{n - (k + 1)}(\tilde{\mathcal{T}})$.
\end{definition}
In analogy to $\dd$ and $\delta$ from \eqref{ddd} we set
\begin{gather*}
  \DECd :=
  \begin{pmatrix} 
    0 &  & \\
    \partial^0 & 0 & \\
    & \partial^1 & 0 & \\
    && \ddots & \ddots
  \end{pmatrix}, \quad
  \DECdelta :=
  \begin{pmatrix} 
    0 & \DECdelta_1 & \\
    & 0 & \DECdelta_2 \\
    & & 0 & \ddots \\
    && & \ddots 
  \end{pmatrix} ,
\end{gather*}
the exterior derivative and co-derivative on the exterior algebra of cochains mapping
$C(\mathcal{T})\to C(\mathcal{T})$.

\begin{lemma}[Discrete Adjoint]\label{lemma:adj} It holds that
  \begin{gather*}
    \innerc{\DECdelta \mathbf{u}}{\mathbf{v}} = \innerc{\mathbf{u}}{\DECd \mathbf{v}} \quad \forall \mathbf{u}, \mathbf{v} \in C(\mathcal{T}).
  \end{gather*}
\end{lemma}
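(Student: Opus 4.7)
The plan is to exploit the block structure of $\DECd$ and $\DECdelta$: since $\DECd$ raises degree by one and $\DECdelta$ lowers degree by one, the identity $\innerc{\DECdelta\mathbf{u}}{\mathbf{v}} = \innerc{\mathbf{u}}{\DECd\mathbf{v}}$ decouples into the degree-wise claim
\begin{gather*}
  \innerc{\DECdelta_{k+1}\mathsf{u}}{\mathsf{v}}_k = \innerc{\mathsf{u}}{\partial^k\mathsf{v}}_{k+1} \quad \forall\mathsf{u}\in C^{k+1}(\mathcal{T}),\,\mathsf{v}\in C^k(\mathcal{T}),
\end{gather*}
for each $0\leq k\leq n-1$, which is all that needs to be checked.

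First I would expand the right-hand side using \eqref{eq:ipc} and $(\partial^k\mathsf{v})(\tau) = \sum_{\sigma\subset\tau}[\sigma:\tau]\mathsf{v}(\sigma)$ (where $[\sigma:\tau]$ denotes the incidence number), thereby rewriting it as a double sum over incident pairs $\sigma\in\mathcal{T}^k,\,\tau\in\mathcal{T}^{k+1}$ weighted by $\frac{|*\tau|}{|\tau|}[\sigma:\tau]\mathsf{u}(\tau)\mathsf{v}(\sigma)$. For the left-hand side I would insert the definition $\DECdelta_{k+1} = (-1)^{k+1}\ccstar_k^{-1}\tilde{\partial}^{n-k-1}\ccstar_{k+1}$ together with the elementary identities $\ccstar_{k+1}\mathsf{u}(*\tau) = \frac{|*\tau|}{|\tau|}\mathsf{u}(\tau)$ and $\ccstar_k^{-1}\alpha(\sigma) = \frac{|\sigma|}{|*\sigma|}\alpha(*\sigma)$ from \cite[Definition 3.6]{GUP25}. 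Observe immediately that the weight $\frac{|*\sigma|}{|\sigma|}$ in \eqref{eq:ipc} cancels the factor $\frac{|\sigma|}{|*\sigma|}$ produced by $\ccstar_k^{-1}$.

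The crucial geometric input is the discrete Stokes-type formula on the dual mesh of a well-centered triangulation, namely $\partial_{n-k}(*\sigma) = (-1)^{k+1}\sum_{\tau\supset\sigma}[\sigma:\tau]\,*\tau$ (see \cite[Section 2.5]{Hirani2003}), which is what governs the orientation of the dual cells relative to the primal incidence numbers. Substituting this into $(\tilde{\partial}^{n-k-1}\ccstar_{k+1}\mathsf{u})(*\sigma)$ produces a sign $(-1)^{k+1}$ that precisely cancels the prefactor in the definition of $\DECdelta_{k+1}$. After these cancellations, both sides collapse onto the symmetric expression
\begin{gather*}
  \sum_{\sigma\in\mathcal{T}^k}\sum_{\tau\in\mathcal{T}^{k+1},\,\tau\supset\sigma} [\sigma:\tau]\,\frac{|*\tau|}{|\tau|}\,\mathsf{u}(\tau)\,\mathsf{v}(\sigma),
\end{gather*}
which settles the identity.

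The principal difficulty I anticipate is purely bookkeeping: making sure that the sign $(-1)^{k+1}$ baked into the definition of $\DECdelta_{k+1}$ is consistent with the orientation convention adopted for dual cells in \cite{GUP25}, so that the two $(-1)^{k+1}$ factors really do cancel. All remaining cancellations of volume ratios are definitional, and no analytic input beyond well-centeredness of $\mathcal{T}$ is required.
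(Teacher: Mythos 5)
Your proposal is correct, but it is considerably more self-contained than the paper's proof, which is a one-line citation: the paper reduces the claim to the degree-wise identity $\innerc{\DECdelta_{k+1}\mathsf{u}}{\mathsf{v}}_k = \innerc{\mathsf{u}}{\partial^k\mathsf{v}}_{k+1}$ exactly as you do (this is what ``the definition of $\innerc{\cdot}{\cdot}$'' accomplishes, since the inner product on $C(\mathcal{T})$ is the direct sum of the degree-wise ones and $\DECd$, $\DECdelta$ shift degree in opposite directions) and then outsources that degree-wise identity to \cite[Lemma~4.12]{GUP25}. What you have written out --- expanding both sides via \eqref{eq:ipc}, cancelling the volume ratios against the weights of the discrete Hodge stars, and invoking the dual-boundary formula $\partial(*\sigma)=(-1)^{k+1}\sum_{\tau\supset\sigma}[\sigma:\tau]\,{*\tau}$ to absorb the sign in the definition of $\DECdelta_{k+1}$ --- is essentially a proof of that cited lemma rather than a use of it. This buys transparency: the mechanism of the adjointness (summation by parts across the primal/dual mesh pair) becomes visible, and it is clear that only combinatorics and the definition of the discrete Hodge stars are involved, with no analytic input. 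The cost is precisely the risk you flag yourself: the orientation conventions for dual cells and the exact form of $\ccstar_k^{-1}$ must match \cite[Definition~3.6]{GUP25}, and a mismatch there would break the cancellation of the two $(-1)^{k+1}$ factors; the citation route delegates that bookkeeping entirely to \cite{GUP25}. Either way the statement stands.
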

\begin{proof}
  This follows directly from \cite[Lemma 4.12]{GUP25} and the definition of $\innerc{\cdot}{\cdot}$.
\end{proof}
\begin{definition}\label{def:dec_dirac}
  The DEC Hodge-Dirac operator is $\DECD := \DECd + \DECdelta.$
\end{definition}

\begin{definition}
  Define
  $\mathring{C}(\mathcal{T}) := \bigoplus_{k = 0}^{n - 1}\mathring{C}^k(\mathcal{T})
  \oplus C_*^n(\mathcal{T})$, where $\mathring{C}^k(\mathcal{T})$ are $k$-cochains
  with zero values on the boundary and $C_*^n(\mathcal{T})$ is the space of
  $n$-cochains with vanishing mean, i.e.\
  $w\in C_*^n(\mathcal{T})\implies \sum_{\sigma\in\mathcal{T}^n}\inner{w}{\sigma} = 0$.
\end{definition}

\section{Commuting Interpolation Operators}

Let $\ipop^k$ denote the canonical projection onto $C^k(\mathcal{T})$, the \emph{de Rham map}, defined
for sufficiently smooth forms that admit an $L^1$ trace on $k$-simplices, see
\cite[Section~4.4]{GUP25}, and set $\ipop := \bigoplus_k \ipop^k$. 
From the results in \cite[Section 4]{GUP25} we learn the following commuting diagram property. 

\begin{lemma}\label{lemma:pid}
  On sufficiently smooth forms, we have the commuting relationship
  \begin{gather*}
    \DECd\circ\ipop = \ipop\circ\dd.
  \end{gather*}
\end{lemma}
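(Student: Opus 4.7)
The plan is to reduce the claimed blockwise identity to its scalar components and then invoke Stokes' theorem. Because $\DECd$ and $\dd$ are block-bidiagonal with only a single nonzero subdiagonal (the coboundaries $\partial^k$ and exterior derivatives $d^k$, respectively), the equation $\DECd\Pi\mathfrak{u} = \Pi\dd\mathfrak{u}$ for a smooth $\mathfrak{u} = (\mathsf{u}_0, \ldots, \mathsf{u}_n) \in \LambdaForms$ decouples into the family of identities $\partial^k \Pi^k \mathsf{u}_k = \Pi^{k+1} d^k \mathsf{u}_k$ for $0 \le k < n$. This is the classical commuting property of the de Rham map with the exterior derivative.

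To verify it at an oriented $(k+1)$-simplex $\tau \in \mathcal{T}^{k+1}$, I would evaluate both sides via the duality pairing with $\tau$. On the one hand, by the definition of $\Pi^{k+1}$ and Stokes' theorem,
\[
\inner{\Pi^{k+1} d^k \mathsf{u}_k}{\tau} = \int_\tau d^k \mathsf{u}_k = \int_{\partial \tau} \mathsf{u}_k.
\]
On the other hand, the coboundary $\partial^k$ is by construction the transpose of the simplicial boundary operator, so
\[
\inner{\partial^k \Pi^k \mathsf{u}_k}{\tau} = \sum_{\sigma \in \mathcal{T}^k} [\tau : \sigma]\, \inner{\Pi^k \mathsf{u}_k}{\sigma} = \sum_{\sigma \in \mathcal{T}^k} [\tau : \sigma] \int_\sigma \mathsf{u}_k = \int_{\partial \tau} \mathsf{u}_k,
\]
where the incidence numbers $[\tau : \sigma]$ encode the induced orientations of the faces of $\tau$. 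The two pairings agree, and since $\tau$ ranges over a basis of $C^{k+1}(\mathcal{T})$, equality of the cochains follows.

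The genuine content of the argument is Stokes' theorem; what remains is merely the orientational bookkeeping needed to identify $\partial^k$ with the transpose of the geometric boundary, and to justify that the de Rham map is well-defined on $\mathsf{u}_k$ under the ``sufficiently smooth'' hypothesis (so that the integrals over $k$-simplices exist). Both of these ingredients are already developed in \cite[Section 4]{GUP25}, so in practice I would simply cite the relevant lemma there to close the argument. I do not foresee any real obstacle.
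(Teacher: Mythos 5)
Your argument is correct and is essentially the same as the paper's, which gives no proof at all and simply attributes the lemma to \cite[Section 4]{GUP25}; the componentwise reduction to $\partial^k\Pi^k = \Pi^{k+1}d^k$ followed by Stokes' theorem and the identification of $\partial^k$ with the transpose of the simplicial boundary is exactly the classical content behind that citation. The only hypothesis you need to keep explicit is that $\mathsf{u}_k$ and $d^k\mathsf{u}_k$ both admit $L^1$ traces on the relevant simplices so that both sides of the pairing are defined, which your ``sufficiently smooth'' caveat already covers.
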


\begin{definition}[{\cite[Section~5.1]{GUP25}}]
    Let $J_k := \ccstar_{k}^{-1}\tilde{\ipop}^{n - k}\star_k$, where
    $\tilde{\ipop}^k$ denotes the canonical projection onto $C^k(\tilde{\mathcal{T}})$, and define $J := \bigoplus_k J_k$.
  \end{definition}
  
\rev{
\begin{remark}
    Note that the operator $J$ in \cite{GUP25} is defined on Whitney forms, whereas the $J$ considered here is defined on cochains. The two versions are related via the Whitney isomorphism, which allows one to recover either from the other.
\end{remark}
}
\rev{Appealing to \cite[Lemma~5.3]{GUP25}, the following commuting property follows
  immediately.}
\begin{lemma}\label{lemma:jdelta}
  On sufficiently smooth forms, we have the commuting relationship $$\DECdelta\circ J = J\circ\delta. $$
\end{lemma}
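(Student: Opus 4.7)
The plan is to reduce the claim to a componentwise identity and then exploit the commuting diagram for the de Rham map applied to the dual mesh. Since both $\DECdelta$ and $\delta$ lower the form degree by one while $J$ preserves it, it suffices to prove, for each $k = 1, \dots, n$, that $\DECdelta_k \circ J_k = J_{k-1} \circ \delta_k$ on sufficiently smooth $k$-forms.

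First I would just substitute the three definitions. On the left,
\begin{gather*}
  \DECdelta_k J_k = (-1)^k\,\ccstar_{k-1}^{-1}\,\tilde{\partial}^{n-k}\,\ccstar_k\,\ccstar_k^{-1}\,\tilde{\Pi}^{n-k}\,\star_k
  = (-1)^k\,\ccstar_{k-1}^{-1}\,\tilde{\partial}^{n-k}\,\tilde{\Pi}^{n-k}\,\star_k,
\end{gather*}
since the two occurrences of $\ccstar_k$ cancel. On the right,
\begin{gather*}
  J_{k-1}\delta_k = (-1)^k\,\ccstar_{k-1}^{-1}\,\tilde{\Pi}^{n-k+1}\,\star_{k-1}\,\star_{k-1}^{-1}\,d^{n-k}\,\star_k
  = (-1)^k\,\ccstar_{k-1}^{-1}\,\tilde{\Pi}^{n-k+1}\,d^{n-k}\,\star_k.
\end{gather*}
Cancelling the common factors $(-1)^k\,\ccstar_{k-1}^{-1}$ on the left and $\star_k$ on the right, the desired identity reduces to the single claim
\begin{gather*}
  \tilde{\partial}^{n-k}\,\tilde{\Pi}^{n-k}\,\eta = \tilde{\Pi}^{n-k+1}\,d^{n-k}\,\eta
\end{gather*}
for all sufficiently smooth $(n-k)$-forms $\eta$ (in particular for $\eta = \star_k\mathsf{u}_k$).

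This reduced identity is precisely the commuting diagram property of the de Rham map on the \emph{dual} mesh, i.e.\ the analogue of \autoref{lemma:pid} with $(\Pi,\partial)$ replaced by $(\tilde{\Pi},\tilde{\partial})$. It follows from Stokes' theorem applied cell-by-cell to the dual polytopes, once one has fixed orientation conventions so that the geometric boundary of an oriented dual cell matches the algebraic coboundary $\tilde{\partial}$. This groundwork is carried out in Section~5 of \cite{GUP25}, and the resulting identity is Lemma~5.3 there, which is exactly what I would cite to finish.

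The main conceptual obstacle is the last step: unlike the primal case, the dual mesh is not simplicial, so verifying that $\tilde{\partial}\,\tilde{\Pi} = \tilde{\Pi}\,d$ requires carefully matching the orientation of each polyhedral dual cell with the signed incidences encoded in $\tilde{\partial}$. Since this verification is done in \cite{GUP25}, the remaining work for us is limited to the short algebraic reduction above.
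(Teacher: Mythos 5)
Your reduction is correct and matches the substance of the paper, which simply cites \cite[Lemma~5.3]{GUP25} for this statement without further proof; your componentwise algebra (cancelling $\ccstar_k\ccstar_k^{-1}$ and $\star_{k-1}\star_{k-1}^{-1}$ to reduce everything to $\tilde{\partial}^{n-k}\tilde{\Pi}^{n-k} = \tilde{\Pi}^{n-k+1}d^{n-k}$ on the dual mesh) is exactly how that cited lemma is obtained. The only nit is attribution: Lemma~5.3 of \cite{GUP25} is the full identity $\DECdelta\circ J = J\circ\delta$ itself rather than the reduced dual de Rham commuting property, so you are in effect re-deriving the cited lemma rather than citing one of its ingredients.
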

The natural DEC discretization of \eqref{eq:strong_form} is: Given
$\mathfrak{f}\in L^2\LambdaForms$ with $\int_\Omega f_n = 0$ which is sufficiently regular to admit
an $L^1$-trace on all simplices, seek $\mathbf{u}\in\mathring{C}(\mathcal{T})$, such that
\begin{gather}
  \label{eq:Dbvp}
  \DECD \mathbf{u} = \ipop\mathfrak{f}.
\end{gather} We need $\int_\Omega f_n = 0$ also in the discrete case, because we want
$\ipop\mathfrak{f}\in\ran\DECD$, but $\ran\DECD$ only contains $n$-cochains with zero
mean. The de Rham map $\ipop$ preserves the integral of the traces over the simplices,
so we get $\ipop\mathfrak{f}\in\ran\DECD$.

\section{A Priori Discretization Error Estimate}

As before, we consider the boundary value problem for the Hodge-Dirac operator with
essential boundary conditions, so assume that $\DD$ acts on spaces with \emph{zero trace}
or \emph{zero mean} in the case of $n$-forms. This also means that
$\mathfrak{f}\in\ran\DD\implies \int_\Omega {f}_n = 0$.

New in this section, now we regard $\mathcal{T}$ as a member of a \emph{uniformly
  shape-regular} sequence $\left( \mathcal{T}_{h}\right)$ of simplicial meshes of $\Omega$
indexed by their meshwidths $h$, which \rev{ tend to zero}.

\subsection{An h-Uniformly Stable Decomposition}
Let $V_h^k\subset H\LambdaForms[k]$ denote the finite element spaces of lowest order
discrete differential forms on $\mathcal{T}$, known as Whitney forms, see \cite[Section
4]{GUP25}, and $\mathring{V}_h := \mathring{V} \cap \bigoplus_{k = 0}^n V_h^k$. Note that
$\mathring{V}_h$ contains only Whitney forms with zero trace on $\partial\Omega$ or zero
mean in the case of $n$-forms. We point out that the FEEC approach to \eqref{eq:3} from
\cite{Leopardi2016} employs $V_{h}^{k}$ for the Galerkin discretization of
\eqref{eq:3}. Here, we will need Whitney forms only as a theoretical tool, exploiting the
algebraic isomorphism of $C^k(\mathcal{T})$ and $V_{h}^{k}$ via the canonical degrees of
freedom.

\begin{definition}[{Whitney Map, \cite[Section~4.4]{GUP25}}]
  The Whitney map on $k$-cochains is the isomorphism onto
  Whitney $k$-forms $\mathcal{W}^k: C^k(\mathcal{T}) \to V_h^k$, given by
  $\mathcal{W}^k\mathsf{w} = \sum_{\sigma\in\mathcal{T}^k}\phi_\sigma\inner{\mathsf{w}}{\sigma}$ where
  $\mathsf{w}\in C^k(\mathcal{T})$ and $\phi_\sigma$ is the Whitney basis form associated to
  $\sigma$. Also, let $\mathcal{W} := \bigoplus_{k = 0}^n\mathcal{W}^k$.
\end{definition}
Note that $\mathcal{W}^{k}$ is represented by an identity matrix with respect to the standard bases of $C^k(\mathcal{T})$ and $V_h^k$.

\begin{lemma}[{\cite[Lemma 4.11]{GUP25}}]\label{lemma:norm_eq}
  The norms $\normm{\cdot}$ and $\norm{\cdot}_{L^2\LambdaForms}$ on spaces of cochains
  and Whitney forms are $h$-uniformly equivalent via the $\mathcal{W}$-isomorphism. More
  precisely, there exist constants $c_-, c_+ > 0$ depending only on the shape-regularity
  of \rev{both meshes $\mathcal{T}$ and $\tilde{\mathcal{T}}$}, such that
    \[
        c_- \norm{\mathcal{W}\mathbf{u}}_{L^2\LambdaForms} \leq \normm{\mathbf{u}} \leq c_+ \norm{\mathcal{W}\mathbf{u}}_{L^2\LambdaForms}\quad\forall \mathbf{u}\in C(\mathcal{T}).
    \]
\end{lemma}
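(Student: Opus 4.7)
The idea is to reduce the global inequality to a local, element-wise comparison on top-dimensional simplices, and then invoke finite-dimensional norm equivalence on a reference simplex, transported to the physical mesh by an affine scaling argument controlled by shape regularity.

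\emph{Localization.} First, I would split both norms over $T\in\mathcal{T}^n$. On the $L^2$ side this is immediate: $\norm{\mathcal{W}\mathbf{u}}_{L^2\LambdaForms}^2 = \sum_{T\in\mathcal{T}^n}\norm{\mathcal{W}\mathbf{u}}_{L^2(T)}^2$. For the DEC norm I would use the decomposition $|*\sigma| = \sum_{T\supset\sigma}|*\sigma\cap T|$---each piece is strictly positive because $\mathcal{T}$ is well-centered---and swap the two summations to obtain
\begin{gather*}
\normm{\mathbf{u}}^2 \;=\; \sum_{T\in\mathcal{T}^n} Q_T(\mathbf{u}),\qquad Q_T(\mathbf{u}) := \sum_{k=0}^n\sum_{\substack{\sigma\subset T\\ \sigma\in\mathcal{T}^k}}\frac{|*\sigma\cap T|}{|\sigma|}\,\inner{\mathsf{u}_k}{\sigma}^2.
\end{gather*}
The crucial observation is that $|*\sigma\cap T|$ is constructed solely from circumcenters of subsimplices of $T$, so $Q_T$ depends only on the intrinsic geometry of the single simplex $T$.

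\emph{Reference estimate and scaling.} On a fixed reference simplex $\hat T$, both $Q_{\hat T}(\cdot)^{1/2}$ and $\norm{\mathcal{W}\cdot}_{L^2(\hat T)}$ are norms on the finite-dimensional space of cochains over the subsimplices of $\hat T$; equivalence of norms in finite dimension yields constants $\hat c_{\pm}>0$ realizing the inequality on $\hat T$. I would then push this to a generic shape-regular $T$ via an affine bijection $F_T\colon\hat T\to T$. Both the weight $|*\sigma\cap T|/|\sigma|$ and the integrals $\int_T \phi_\sigma\wedge\star\phi_{\sigma'}$ for $\sigma,\sigma'\subset T$ of common dimension $k$ are homogeneous of degree $n-2k$ in $h_T$, with the remaining prefactors bounded above and below in terms of the shape-regularity constant (and of the well-centeredness bounds). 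Summing over $T\in\mathcal{T}^n$ then delivers the global equivalence with constants $c_{\pm}$ depending only on shape regularity.

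\emph{Main obstacle.} The crux is verifying that $Q_T$ and the local $L^2$ inner product on Whitney forms transform with matching powers of $h_T$ under the affine pullback, so that the mesh-size scalings cancel uniformly. This rests essentially on the locality of $|*\sigma\cap T|$ established in the first step; without that property, $Q_T$ would couple $T$ to its neighbours and no purely element-wise scaling argument could succeed.
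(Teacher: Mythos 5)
The paper offers no proof of this lemma at all --- it is imported verbatim as \cite[Lemma~4.11]{GUP25} --- so there is no in-paper argument to compare against; I can only assess your sketch on its own terms. Your strategy is the standard and essentially correct one: localize both quadratic forms to individual $n$-simplices via $|*\sigma|=\sum_{T\supset\sigma}|*\sigma\cap T|$, note that $Q_T$ and $\norm{\mathcal{W}\cdot}_{L^2(T)}$ are both norms on the finite-dimensional space of local cochains (the latter by unisolvence of the Whitney degrees of freedom on a single simplex --- worth stating, since it is what makes the local $L^2$ quantity definite), and match the $h_T^{n-2k}$ scaling of the weights $|*\sigma\cap T|/|\sigma|$ against that of $\int_T\phi_\sigma\wedge\star\phi_{\sigma'}$; the scaling count is right. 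Two points need repair. First, circumcenters are not affine invariants, so $Q_T$ does \emph{not} pull back to $Q_{\hat T}$ under $F_T$: the ``one reference simplex plus affine scaling'' step is not literally executable. The fix is either a compactness--continuity argument over the family of unit-diameter simplices satisfying the shape-regularity and well-centeredness bounds, or direct two-sided estimates $c\,h_T^{n-k}\le|*\sigma\cap T|\le C\,h_T^{n-k}$ proved from the geometry of the circumcentric subdivision; your phrase about ``prefactors bounded above and below'' gestures at this but does not supply it. Second, the lower bound in that estimate --- hence the inequality $c_-\norm{\mathcal{W}\mathbf{u}}_{L^2\LambdaForms}\le\normm{\mathbf{u}}$ --- genuinely requires \emph{uniform} well-centeredness of the mesh family (circumcenters uniformly bounded away from the simplex boundaries as $h\to0$): shape regularity alone allows $|*\sigma\cap T|$ to degenerate to zero. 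You acknowledge this only parenthetically, while the lemma's wording credits the constants to shape regularity alone; that hypothesis must be made explicit, as it is in the well-centered setting of \cite{GUP25}.
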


Similar to \cite[Lemma 5.11]{GUP25}, we will need the following result relating to the
Hodge-decomposition:
\begin{lemma}\label{lemma:hodge_decomp}
  For any $\mathbf{u}\in\mathring{C}(\mathcal{T})$ there exist
  $\mathbf{v}, \mathbf{w} \in\mathring{C}(\mathcal{T})$ such that
  \begin{align*}
    \mathbf{u} & = \DECd\mathbf{v} + \mathbf{w}, \\
    \normm{\mathbf{w}} + \normm{\DECd\mathbf{w}} &\leq C \normm{\DECd\mathbf{u}}, \\
    \normm{\mathbf{v}} + \normm{\DECd \mathbf{v}} &\leq C' \normm{\mathbf{u}}
    \end{align*} for constants $C, C' \geq 0$ independent of $\mathbf{u}$ and the
    meshwidth $h$.
\end{lemma}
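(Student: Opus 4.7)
The plan is to reduce the statement to a per-degree decomposition and then transfer it to the Whitney-form setting via the $\mathcal{W}$-isomorphism, where the result is the standard $h$-uniform discrete Hodge decomposition of FEEC.

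Since $\DECd$ is block sub-diagonal with blocks $\partial^{k-1}\colon C^{k-1}(\mathcal{T})\to C^{k}(\mathcal{T})$, I would write $\mathbf{u}=\bigoplus_{k}\mathsf{u}_{k}$ with $\mathsf{u}_{k}\in\mathring{C}^{k}(\mathcal{T})$ for $k<n$ and $\mathsf{u}_{n}\in C_{*}^{n}(\mathcal{T})$, and produce for each $k$ a pair $\mathsf{v}_{k-1}$, $\mathsf{w}_{k}$ satisfying
\begin{gather*}
  \mathsf{u}_{k}=\partial^{k-1}\mathsf{v}_{k-1}+\mathsf{w}_{k},\quad
  \normm{\mathsf{w}_{k}}_{k}+\normm{\partial^{k}\mathsf{w}_{k}}_{k+1}\le C\normm{\partial^{k}\mathsf{u}_{k}}_{k+1},\quad
  \normm{\mathsf{v}_{k-1}}_{k-1}+\normm{\partial^{k-1}\mathsf{v}_{k-1}}_{k}\le C'\normm{\mathsf{u}_{k}}_{k}.
\end{gather*}
Summing squared bounds over $k$ and assembling $\mathbf{v}:=\bigoplus_{k}\mathsf{v}_{k-1}$, $\mathbf{w}:=\bigoplus_{k}\mathsf{w}_{k}$ then yields the lemma. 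Note that $\DECd\mathbf{w}=\DECd\mathbf{u}$ automatically by $\DECd\circ\DECd=0$.

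For a fixed $k$, I would transport $\mathsf{u}_{k}$ to the Whitney form $\tilde{u}:=\mathcal{W}^{k}\mathsf{u}_{k}\in V_{h}^{k}\cap\mathring{H}\LambdaForms[k]$ (or $V_{h}^{n}$ with zero mean when $k=n$), using that $\mathcal{W}^{k}$ preserves vanishing traces and, by Stokes' theorem, the integral of an $n$-form. Because the Whitney map intertwines $\partial$ and $d$, it suffices to establish the analogous decomposition in the Whitney subcomplex: there exist $\tilde{v}\in V_{h}^{k-1}\cap\mathring{H}\LambdaForms[k-1]$ and $\tilde{w}\in V_{h}^{k}\cap\mathring{H}\LambdaForms[k]$ with $\tilde{u}=d\tilde{v}+\tilde{w}$, $\|\tilde{w}\|_{L^{2}}+\|d\tilde{w}\|_{L^{2}}\le C\|d\tilde{u}\|_{L^{2}}$, and $\|\tilde{v}\|_{L^{2}}+\|d\tilde{v}\|_{L^{2}}\le C'\|\tilde{u}\|_{L^{2}}$, with constants depending only on shape regularity. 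Setting $\mathsf{v}_{k-1}:=(\mathcal{W}^{k-1})^{-1}\tilde{v}$ and $\mathsf{w}_{k}:=(\mathcal{W}^{k})^{-1}\tilde{w}$ and invoking Lemma~\ref{lemma:norm_eq} (applied both to the cochains and, via $\partial\mathsf{w}_{k}=\mathcal{W}^{-1}(d\tilde{w})$, to their coboundaries) transfers the $L^{2}$-bounds to $\normm{\cdot}$-bounds with $h$-uniform constants.

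The principal obstacle is the $h$-uniform discrete Hodge decomposition for the Whitney subcomplex with essential boundary conditions. This is classical FEEC material but nontrivial: it rests on the existence of a uniformly $L^{2}$-bounded cochain projection onto Whitney forms (Arnold--Falk--Winther) combined with the triviality of the cohomology of $\{V_{h}^{k}\cap\mathring{H}\LambdaForms[k]\}$, which holds because $\Omega$ is topologically trivial and the only remaining harmonic summand, the constant $n$-forms, has been excised by the zero-mean constraint. A secondary item is checking that the splitting respects the boundary conditions degree-by-degree; this follows because the exact summand $d(V_{h}^{k-1}\cap\mathring{H}\LambdaForms[k-1])$ sits in $V_{h}^{k}\cap\mathring{H}\LambdaForms[k]$ by construction, and in top degree it lies in $L^{2}_{*}\LambdaForms[n]$ by Stokes' theorem.
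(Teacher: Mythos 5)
Your proposal is correct and follows essentially the same route as the paper: transfer the statement to Whitney forms via the $\mathcal{W}$-isomorphism and \Cref{lemma:norm_eq}, then invoke the $L^2$-orthogonal discrete Hodge decomposition of $\mathring{V}_h$ (with harmonic forms excluded by the boundary/zero-mean conditions) together with the $h$-uniform discrete Poincar\'e inequality, which the paper cites directly from Leopardi--Stern rather than re-deriving from bounded cochain projections. The only cosmetic difference is that you argue degree-by-degree while the paper works with the whole graded complex at once; this changes nothing of substance.
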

\begin{proof}
    First, we note that due to the norm equivalence from \Cref{lemma:norm_eq} and the fact that $\mathcal{W}\DECd \equiv \dd\mathcal{W}$, it is sufficient to prove that $\exists \alpha, \beta\in \mathring{V}_h$ such that
    \begin{align}
        \mathcal{W}\mathbf{u} & = \dd\alpha + \beta, \label{eq:form_decomp} \\
        \norm{\beta}_{L^2\LambdaForms} + \norm{\dd\beta}_{L^2\LambdaForms} &\leq C \norm{\dd\mathcal{W}\mathbf{u}}_{L^2\LambdaForms} \label{eq:beta_ineq}, \\
        \norm{\alpha}_{L^2\LambdaForms} + \norm{\dd\alpha}_{L^2\LambdaForms} &\leq C' \norm{\mathcal{W}\mathbf{u}}_{L^2\LambdaForms}, \label{eq:alpha_ineq}.
      \end{align} To see this, we consider the discrete Hodge decomposition (see
      \cite[Section 3.1]{Leopardi2016}; note that $\mathring{V}_h$ does not contain harmonic forms \rev{as $\Omega$ is topologically trivial by assumption.})
    \[
        \mathring{V}_h = \mathfrak{B}_h \oplus\mathfrak{Z}_h^\perp,
    \] where $\mathfrak{B}_h$ is the range and $\mathfrak{Z}_h$ the kernel of $\dd|_{\mathring{V}_h}$. Moreover, this decomposition is $L^2\LambdaForms$-orthogonal. \\
    This implies that we can find $\alpha\in\mathring{V}_h, \beta\in\mathfrak{Z}_h^\perp$ such that $\mathcal{W}\mathbf{u} = \dd\alpha + \beta$. Note that $\alpha$ is not unique, as adding any element in $\mathfrak{Z}_h$ gives the same $\dd\alpha$, hence we can safely assume that there exists an $\alpha$ orthogonal to $\mathfrak{Z}_h$, meaning we can find suitable $\alpha, \beta\in\mathfrak{Z}_h^\perp$.
    
    We can now apply the discrete Poincar\'e inequality from \cite[Lemma 9]{Leopardi2016} (see also \cite[Theorem 5.2]{Arnold2018}) to $\beta$, which is applicable as $\beta\in\mathfrak{Z}_h^\perp$, and get
    \begin{align*}
        \norm{\beta}_{L^2\LambdaForms} + \norm{\dd\beta}_{L^2\LambdaForms} &\leq C \norm{\dd\beta}_{L^2\LambdaForms} & \left[\text{Poincar\'e Inequality}\right] \\
        & = C \norm{\dd\left(\mathcal{W}\mathbf{u} - \dd\alpha\right)}_{L^2\LambdaForms} & \left[\text{Using \eqref{eq:form_decomp}}\right] \\
        & = C \norm{\dd\mathcal{W}\mathbf{u}}_{L^2\LambdaForms} & \left[\dd^2 \equiv 0\right]
    \end{align*} for some mesh-width independent $C \geq 0$, which proves \eqref{eq:beta_ineq}. \\
    To prove \eqref{eq:alpha_ineq}, we can apply the Poincar\'e inequality to
    $\alpha\in\mathfrak{Z}_h^\perp$ and use the orthogonality of the decomposition to
    arrive at 
    \begin{align*}
        \norm{\alpha}_{L^2\LambdaForms}^2 + \norm{\dd\alpha}_{L^2\LambdaForms}^2 &\leq C' \norm{\dd\alpha}_{L^2\LambdaForms}^2 \leq C' \left(\norm{\dd\alpha}_{L^2\LambdaForms}^2 +
          \norm{\beta}_{L^2\LambdaForms}^2\right)
        = C' \norm{\mathcal{W}\mathbf{u}}_{L^2\LambdaForms}^2.
    \end{align*} Applying Young's inequality to the above concludes the proof.
  \end{proof}
  
  \subsection{Main Error Bound}
  
\noindent\fbox{\parbox{\linewidth}{
    \begin{theorem}\label{theorem:norme}
      Given $\mathfrak{f}\in\ran\DD$ in the domain of $\ipop$ and $J$, we assume
      that the strong solution $\mathfrak{u}\in \mathring{V}\cap H^*\LambdaForms$ of
      the Hodge-Dirac boundary value problem
      \begin{gather*}
        \DD\mathfrak{u} = \mathfrak{f}
      \end{gather*}
      is sufficiently regular such that \Cref{lemma:pid} and \Cref{lemma:jdelta} apply and
      $\ipop$ and $J$ are well-defined on $\mathfrak{u}, \dd\mathfrak{u}$ and
      $\delta\mathfrak{u}$. Further let $\mathbf{u}\in \mathring{C}(\mathcal{T})$ solve
      the DEC equation
      \begin{gather*}
        \DECD \mathbf{u} = \ipop\mathfrak{f},
      \end{gather*}
      and denote the error in cochain space by $e := \ipop\mathfrak{u} - \mathbf{u}$. Then
      \begin{align*}
        \normm{e} + \normm{\DECd e} \leq& C\left(\normm{(\ipop - J) \mathfrak{u}} + \normm{(\ipop - J) \dd\mathfrak{u}} + \normm{(\ipop - J)\mathfrak{f}}\right),
      \end{align*} where $C \geq 0$ is a constant independent of $\mathfrak{f}$ and the
      meshwidth $h$. 
    \end{theorem}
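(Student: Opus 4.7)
The plan is to derive an error identity for $\DECD e$ by leveraging both commuting properties---\Cref{lemma:pid} for $\Pi$ versus $\dd$ and \Cref{lemma:jdelta} for $J$ versus $\delta$---and then to estimate the two pieces of $e$ furnished by the discrete Hodge decomposition of \Cref{lemma:hodge_decomp}. Concretely, from $\DECd\Pi\mathfrak{u} = \Pi\dd\mathfrak{u}$ and the split $\DECdelta\Pi\mathfrak{u} = \DECdelta(\Pi - J)\mathfrak{u} + J\delta\mathfrak{u}$, together with $\mathfrak{f} = \dd\mathfrak{u} + \delta\mathfrak{u}$ and $\DECD\mathbf{u} = \Pi\mathfrak{f}$, one obtains
\begin{gather*}
  \DECD e = \DECdelta(\Pi - J)\mathfrak{u} + (J - \Pi)\delta\mathfrak{u}.
\end{gather*}
After substituting $\delta\mathfrak{u} = \mathfrak{f} - \dd\mathfrak{u}$, the second term on the right is immediately controlled by $\normm{(\Pi - J)\mathfrak{f}} + \normm{(\Pi - J)\dd\mathfrak{u}}$, but the first carries an unwelcome $\DECdelta$ acting on the interpolation difference. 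Removing this $\DECdelta$ through discrete adjointness (\Cref{lemma:adj}) is the central mechanism.

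Since $\mathfrak{u} \in \mathring{V}$ has vanishing trace (or vanishing mean in the top grade), $\Pi\mathfrak{u}$ and $\mathbf{u}$ both lie in $\mathring{C}(\mathcal{T})$, so $e \in \mathring{C}(\mathcal{T})$ and \Cref{lemma:hodge_decomp} yields $e = \DECd\mathbf{v} + \mathbf{w}$. Because $\DECd^2 = 0$ we have $\DECd e = \DECd\mathbf{w}$, hence
\begin{gather*}
  \normm{\DECd e}^2 = \innerc{\DECd e}{\DECd \mathbf{w}} = \innerc{\DECdelta\DECd e}{\mathbf{w}}
\end{gather*}
by \Cref{lemma:adj}. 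Now $\DECdelta^2 = 0$ collapses $\DECdelta\DECd e = \DECdelta\DECD e$ to $\DECdelta(J - \Pi)\delta\mathfrak{u}$ via the error identity; shifting this $\DECdelta$ back onto $\mathbf{w}$ through adjointness, together with $\normm{\DECd\mathbf{w}} \leq C\normm{\DECd e}$ and Cauchy--Schwarz, yields $\normm{\DECd e} \leq C\normm{(\Pi - J)\delta\mathfrak{u}}$.

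For $\normm{e}$, I would write $\normm{e}^2 = \innerc{e}{\DECd\mathbf{v}} + \innerc{e}{\mathbf{w}} = \innerc{\DECdelta e}{\mathbf{v}} + \innerc{e}{\mathbf{w}}$, then substitute $\DECdelta e = \DECD e - \DECd e$ via the error identity and move the $\DECdelta$ in $\innerc{\DECdelta(\Pi - J)\mathfrak{u}}{\mathbf{v}}$ onto $\mathbf{v}$ by adjointness. The \Cref{lemma:hodge_decomp} bounds $\normm{\mathbf{v}}, \normm{\DECd\mathbf{v}} \leq C'\normm{e}$ and $\normm{\mathbf{w}} \leq C\normm{\DECd e}$, combined with Cauchy--Schwarz and division by $\normm{e}$, then give $\normm{e} \leq C(\normm{(\Pi - J)\mathfrak{u}} + \normm{(\Pi - J)\delta\mathfrak{u}} + \normm{\DECd e})$. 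Adding the prior bound on $\normm{\DECd e}$ and replacing $\delta\mathfrak{u}$ by $\mathfrak{f} - \dd\mathfrak{u}$ delivers the theorem. The main obstacle I foresee is the stray $\DECdelta(\Pi - J)\mathfrak{u}$ in the error identity, which is not $h$-uniformly controllable on its own; the trick is that every time it appears it can be paired under the discrete inner product with a cochain whose $\DECd$-image is bounded by \Cref{lemma:hodge_decomp}, so discrete adjointness shifts $\DECdelta$ off the uncontrollable factor and onto a harmless test object.
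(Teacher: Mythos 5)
Your proposal is correct and follows essentially the same route as the paper: the same error identity for $\DECD e$ (you merely group $(\Pi-J)\dd\mathfrak{u} + (J-\Pi)\mathfrak{f}$ as $(J-\Pi)\delta\mathfrak{u}$), the same use of $(\DECdelta)^2=0$ plus discrete adjointness to kill the uncontrollable $\DECdelta(\Pi-J)\mathfrak{u}$ term in the $\normm{\DECd e}$ bound, and the same Hodge-decomposition argument for $\normm{e}$. The only cosmetic deviation is pairing $\DECd e$ with $\DECd\mathbf{w}$ rather than with $\DECd e$ itself in the first estimate, which works equally well.
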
}}

\begin{proof}[Proof of \Cref{theorem:norme}] Unless stated otherwise, $C, C' \geq 0$
  denote generic mesh-width independent constants which may change from expression to
  expression.
  
  Similar to the proof of Theorem 5.2 in \cite{GUP25}, we apply the operator to the
  error $e$ and get an estimate of $\normm{\DECd e}$:
  \begin{flalign*}
     & & \DECD e & = (\DECd + \DECdelta)e & \left[\text{\Cref{def:dec_dirac}}\right]\\
    & & & = \DECd\ipop\mathfrak{u} + \DECdelta\ipop\mathfrak{u} - (\DECd +
    \DECdelta)\mathbf{u} & \left[e = \ipop\mathfrak{u} - \mathbf{u}\right] \\ & & & =
    \DECd\ipop\mathfrak{u} + \DECdelta\ipop\mathfrak{u} - \ipop\mathfrak{f} &
    \left[\text{Using } (\DECd + \DECdelta)\mathbf{u} = \ipop\mathfrak{f}\right] \\ & &
    & = \ipop\dd\mathfrak{u} + \DECdelta J \mathfrak{u} + \DECdelta(\ipop -
    J)\mathfrak{u} - \ipop f & \left[\text{Adding $0$, \Cref{lemma:pid}}\right] \\ & &
    & = (\ipop - J)\dd\mathfrak{u} + \DECdelta (\ipop - J)\mathfrak{u} + (J -
    \ipop)\mathfrak{f}. & \hspace*{-3em}
    \left[
      \text{Using }\DECdelta J\mathfrak{u} = J\delta\mathfrak{u}
      = J(\mathfrak{f} - \dd\mathfrak{u})\right]
      \stepcounter{equation}\tag{\theequation}\label{eq:dec_dirac_e}
  \end{flalign*}
  \rev{
  Moreover,
  \begin{flalign*}
    && \DECdelta\DECd e & = \DECdelta(\DECd + \DECdelta) e &
    \left[\text{Using }\left(\DECdelta\right)^2 = 0\right] \\ & &
    & = \DECdelta (\ipop - J)\dd\mathfrak{u} + \DECdelta(J - \ipop)\mathfrak{f}, & \left[\text{Using \eqref{eq:dec_dirac_e}}\right]
  \end{flalign*}
  and
  \begin{flalign*}
    & & \normm{\DECd e}^2 & = \innerc{\DECd e}{\DECd e} = \innerc{\DECdelta
      \DECd e}{e} & \left[\text{\Cref{lemma:adj}}\right] \\ & & & =
    \innerc{\DECdelta(\ipop - J)\dd\mathfrak{u} + \DECdelta(J - \ipop)\mathfrak{f}}{e} &
    \left[\text{Using } \left(\DECdelta\right)^2 = 0\right] \\ && & = \innerc{(\ipop -
      J)\dd\mathfrak{u}}{\DECd e} + \innerc{(J - \ipop)\mathfrak{f}}{\DECd e} &
    \left[\text{\Cref{lemma:adj}}\right] \\ &&
    & \leq \normm{(\ipop - J)\dd\mathfrak{u}}\normm{\DECd e} + \normm{(J - \ipop)\mathfrak{f}}\normm{\DECd e}. & \left[\text{Cauchy-Schwarz}\right] \\
    \implies & & \normm{\DECd e} &\leq \normm{(\ipop - J)\dd\mathfrak{u}} + \normm{(\ipop -
      J)\mathfrak{f}}.& \stepcounter{equation}\tag{\theequation}\label{eq:de_estimate}
  \end{flalign*}
  }
  To bound $\normm{e}$, we proceed similarly to \cite[Lemma 5.14]{GUP25}: Using \Cref{lemma:hodge_decomp}, we find $\mathbf{v}, \mathbf{w}\in\mathring{C}(\mathcal{T})$ such that
  \begin{align*}
    e = \DECd\mathbf{v} + \mathbf{w}, \quad
    \normm{\mathbf{v}} + \normm{\DECd \mathbf{v}} \leq C \normm{e}, \quad
    \stepcounter{equation}\tag{\theequation}\label{eq:v_estimate}
    \normm{\mathbf{w}} \leq C' \normm{\DECd e}.
  \end{align*}
  Thus,
  \[
    \normm{e}^2 = \innerc{\DECd\mathbf{v}}{e} + \innerc{\mathbf{w}}{e} = \innerc{\mathbf{v}}{\DECdelta e} + \innerc{\mathbf{w}}{e}
  \] by \Cref{lemma:adj}. We can immediately estimate the second term \rev{using \eqref{eq:de_estimate} and \eqref{eq:v_estimate}}:
  \begin{equation}
      \innerc{\mathbf{w}}{e} \leq \normm{\mathbf{w}}\normm{e} \leq C' \normm{\DECd e}\normm{e} \leq C' \left(\normm{(\ipop - J)\dd\mathfrak{u}} + \normm{(\ipop -
      J)\mathfrak{f}}\right) \normm{e}. \label{eq:pibar_delta_varphi}
  \end{equation}
  In order to estimate the first term, we re-write $\DECdelta e$ using \eqref{eq:dec_dirac_e}:
  \[
      \DECdelta e = \DECD e - \DECd e = \DECdelta(\ipop - J)\mathfrak{u} + (\ipop - J)\dd\mathfrak{u} + (J - \ipop)\mathfrak{f} - \DECd e.
  \]
  \begin{flalign*}
      \implies \innerc{\mathbf{v}}{\DECdelta e} & = \innerc{\mathbf{v}}{\DECdelta(\ipop - J)\mathfrak{u} + (\ipop - J)\dd\mathfrak{u} + (J - \ipop)\mathfrak{f} - \DECd e} & \\
      & = \innerc{\DECd\mathbf{v}}{(\ipop - J)\mathfrak{u}} + \innerc{\mathbf{v}}{(\ipop - J)\dd\mathfrak{u}} + \\
      & \ \quad \innerc{\mathbf{v}}{(J - \ipop)\mathfrak{f}} - \innerc{\mathbf{v}}{\DECd e} & \\
      & \leq \normm{\DECd\mathbf{v}}\normm{(\ipop - J) \mathfrak{u}} + \\
      & \ \quad \normm{\mathbf{v}}\left(\normm{(\ipop - J) \dd\mathfrak{u}} + \normm{(\ipop - J)\mathfrak{f}} + \normm{\DECd e}\right) &\left[\text{Cauchy-Schwarz}\right] \\
      & \leq \normm{\DECd\mathbf{v}}\normm{(\ipop - J) \mathfrak{u}} + \\
      & \ \quad 2\normm{\mathbf{v}}\left(\normm{(\ipop - J) \dd\mathfrak{u}} + \normm{(\ipop - J)\mathfrak{f}}\right) & \left[\text{Using \eqref{eq:de_estimate}}\right] \\
      & \leq 2 \left(\normm{(\ipop - J) \mathfrak{u}} + \normm{(\ipop - J) \dd\mathfrak{u}} + \right. \\
      & \ \qquad \left. \normm{(\ipop - J)\mathfrak{f}}\right)\left( \normm{\mathbf{v}} + \normm{\DECd\mathbf{v}} \right) \\
      & \leq C \left(\normm{(\ipop - J) \mathfrak{u}} + \normm{(\ipop - J) \dd\mathfrak{u}} + \normm{(\ipop - J)\mathfrak{f}}\right)\normm{e}. & \left[\text{Using \eqref{eq:v_estimate}}\right]
      \stepcounter{equation}\tag{\theequation}\label{eq:pieta_deltae}
  \end{flalign*} Combining \eqref{eq:pibar_delta_varphi} and \eqref{eq:pieta_deltae}, we get
  \begin{flalign*}
      & &\normm{e}^2 = & \innerc{\mathbf{v}}{\DECdelta e} + \innerc{\mathbf{w}}{e} & \\
      & & \leq & C' \left(\normm{(\ipop - J)\dd\mathfrak{u}} + \normm{(\ipop -
      J)\mathfrak{f}}\right) \normm{e} + & \\
      & & & C\ \left(\normm{(\ipop - J) \mathfrak{u}} + \normm{(\ipop - J) \dd\mathfrak{u}} + \normm{(\ipop - J)\mathfrak{f}}\right)\normm{e}. &
  \end{flalign*}
  \rev{
  Employing \eqref{eq:de_estimate}, we conclude that
  \begin{flalign*}
      & & \normm{e} + \normm{\DECd e} \leq& C\left(\normm{(\ipop - J) \mathfrak{u}} + \normm{(\ipop - J) \dd\mathfrak{u}} + \normm{(\ipop - J)\mathfrak{f}}\right), &
  \end{flalign*} which is the assertion of the theorem.
  }
\end{proof}

\begin{remark}
  We examined the discretization error in a finite-difference sense as the difference of
  the discrete solution and a ``projection of the exact solution on $\mathcal{T}$''. We
  can easily obtain an error estimate in the FEEC sense:
    \[
        \norm{\mathfrak{u} - \mathcal{W}\mathbf{u}}_{H\Lambda(\Omega)} \leq C\left(\norm{\mathfrak{u} - \mathcal{W}\ipop\mathfrak{u}}_{H\Lambda(\Omega)} + \normm{e} + \normm{\DECd e}\right).
    \]
\end{remark}

\subsection{Rates of Convergence}
Now that we can bound the discrete error, we only need an estimate for $\ipop - J$,
which is given in \cite{GUP25}.

\begin{lemma}\label{lemma:pij_difference}
  Given a sufficiently smooth $\mathfrak{u} \equiv (u_0, \dots, u_n)$ in the exterior
  algebra of differential forms, we have
  \begin{gather*}
    \normm{(\ipop - J)\mathfrak{u}}^2 \leq C \sum_{k = 0}^n
    \sum_{s = 1}^{r_k} h^{2s}|u_k|^2_{H^s(\Omega)},
  \end{gather*}
  where $h$ is the mesh-width, $C \geq 0$ a constant independent of $h$ and
  $r_k = \max\left\{ \lceil \frac{n - k}{2} + \varepsilon \rceil, \lceil \frac{k}{2}
    + \varepsilon \rceil \right\}$ for any $0 < \varepsilon < 1$.
\end{lemma}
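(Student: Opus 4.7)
The plan is to reduce the global estimate to local per-simplex estimates. By definition of the cochain norm and since $\Pi-J$ respects the grading,
\[
\normm{(\Pi - J)\mathfrak{u}}^2 \;=\; \sum_{k=0}^{n}\sum_{\sigma\in\mathcal{T}^k}\frac{|*\sigma|}{|\sigma|}\,\bigl|((\Pi^k - J_k)\mathsf{u}_k)(\sigma)\bigr|^{2},
\]
so it suffices to prove the claim componentwise for each $k$ and then sum. Writing out the definitions, $(\Pi^k\mathsf{u}_k)(\sigma)=\int_\sigma \mathsf{u}_k$, while using the standard identity $\ccstar_k^{-1}\tilde{w}(\sigma)=\frac{|\sigma|}{|*\sigma|}\tilde{w}(*\sigma)$ gives $(J_k\mathsf{u}_k)(\sigma)=\frac{|\sigma|}{|*\sigma|}\int_{*\sigma}\star_k\mathsf{u}_k$. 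Thus each local contribution is the difference of integrals of $\mathsf{u}_k$ over the primal $k$-simplex $\sigma$ and of $\star\mathsf{u}_k$ over the orthogonal dual $(n-k)$-cell $*\sigma$, suitably normalized.

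The next step is a consistency observation: for a constant differential form, orthogonality of the dual mesh implies that the tangential trace on $\sigma$ and the normal trace on $*\sigma$ carry the same coefficient information up to the appropriate volume factor, so $(\Pi^k-J_k)\mathsf{u}_k(\sigma)=0$ whenever $\mathsf{u}_k$ is constant on the macro-patch $\omega_\sigma := \sigma \cup (*\sigma)$ (or, more generally, on its convex hull). This is the polynomial-annihilation property that drives the Bramble-Hilbert style argument. A scaled local estimate on $\omega_\sigma$, which has diameter $O(h)$ by shape-regularity, then yields
\[
\bigl|((\Pi^k - J_k)\mathsf{u}_k)(\sigma)\bigr|^{2} \;\lesssim\; \bigl(|\sigma|\,|*\sigma|\bigr)\cdot h^{2s}\,|\mathsf{u}_k|^{2}_{H^{s}(\omega_\sigma)}
\]
for every admissible $s$; multiplying by $|*\sigma|/|\sigma|$ and noting that the weight $|*\sigma|^2$ together with $h$-scaling of traces gives the clean $h^{2s}$ factor, followed by summation and use of the uniformly finite overlap of the patches $\omega_\sigma$, produces the asserted bound.

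The one delicate point, and the origin of the exponent $r_k$, is the trace step. The local functional evaluates $\mathsf{u}_k$ on the $k$-dimensional set $\sigma$ and its Hodge dual on the $(n-k)$-dimensional set $*\sigma$; these integrals are not continuous on $L^2\Lambda^k$, so a trace inequality is indispensable. Standard Sobolev trace theory requires the Sobolev index to exceed $\tfrac{n-k}{2}$ and $\tfrac{k}{2}$ respectively by a positive amount; rounding up with an $\varepsilon$-slack produces precisely $r_k=\max\{\lceil (n-k)/2+\varepsilon\rceil,\lceil k/2+\varepsilon\rceil\}$. The hard part will therefore be the combined Bramble-Hilbert / trace estimate on $\omega_\sigma$, keeping the scaling of both the $k$- and $(n-k)$-dimensional traces carefully book-kept so that shape-regularity yields the mesh-independent constant $C$. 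In fact, this entire argument is carried out in \cite{GUP25}, so the proof ultimately amounts to invoking the corresponding local trace / Bramble-Hilbert estimate there and assembling the contributions through \Cref{def:dec_inner}.
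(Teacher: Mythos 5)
Your proof lands in exactly the same place as the paper's: the paper proves \Cref{lemma:pij_difference} simply by citing the componentwise estimate $\normm{(\Pi^k - J_k)\mathsf{u}_k}_k^2 \leq C_k \sum_{s=1}^{r_k} h^{2s}|\mathsf{u}_k|^2_{H^s(\Omega)}$ from \cite[Lemma 5.10]{GUP25}, observing that $\normm{\mathbf{v}}^2 = \sum_k \normm{\mathsf{v}_k}_k^2$, and taking $C = \max_k C_k$ --- which is precisely the reduction you perform, with your sketch of the underlying annihilation-of-constants / Bramble--Hilbert / trace mechanism being a correct account of why the cited lemma holds (including the origin of $r_k$ from the codimension-$(n-k)$ and codimension-$k$ trace requirements). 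The only caveat is that your displayed local bound $\bigl|((\Pi^k - J_k)\mathsf{u}_k)(\sigma)\bigr|^2 \lesssim \bigl(|\sigma|\,|*\sigma|\bigr) h^{2s}|\mathsf{u}_k|^2_{H^s(\omega_\sigma)}$ carries the wrong volume factor (one needs roughly $\tfrac{|\sigma|}{|*\sigma|}\,h^{2s}$ so that multiplying by the weight $\tfrac{|*\sigma|}{|\sigma|}$ leaves a clean $h^{2s}$), but this is immaterial since you delegate that local estimate to \cite{GUP25} anyway.
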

\begin{proof}
  \cite[Lemma 5.10]{GUP25} tells us that for all $k$, we have
  \begin{gather*}
    \normm{(\ipop^k - J_k)u_k}_k^2 \leq C_k \sum_{s = 1}^{r_k} h^{2s}|u_k|^2_{H^s(\Omega)}
  \end{gather*}
  for some $h$-independent constant $C_k$. Realizing that
  $\normm{\mathbf{v}}^2 = \sum_{k = 0}^n \normm{\mathsf{v}_k}^2_k$ for all
  $\mathbf{v}\in C(\mathcal{T})$ and setting $C = \max_k C_k$ yields the desired
  result.
\end{proof}

In a similar setting as before, we can prove an estimate for sufficiently smooth
solutions. Let $C^\ell\LambdaForms$ denote the space of $\ell$-times continuously
differentiable forms.

\begin{proposition}
  \label{thm:43}
  Let $r := \lceil \frac{n}{2} + \epsilon \rceil$ for any $0 < \epsilon < 1$. Given
  $\mathfrak{f}\in\ran\DD \cap C^r\LambdaForms$, assume that we are given a strong
  solution $\mathfrak{u}\in \mathring{V}\cap C^{r + 1}\LambdaForms$ to the Hodge-Dirac
  problem
  \begin{gather*}
    \DD\mathfrak{u} = \mathfrak{f}.
  \end{gather*} \
  Let $\mathbf{u}\in \mathring{C}(\mathcal{T})$ solve its discrete counterpart
  \begin{gather*}
    \DECD \mathbf{u} = \ipop\mathfrak{f},
  \end{gather*}
  and denote the error in cochain space by $e := \ipop\mathfrak{u} - \mathbf{u}$. Then
  \begin{gather*}
    \normm{e}^2 + \normm{\DECd e}^2 \leq C \sum_{k = 0}^n \sum_{s = 1}^{r_k} h^{2s}\left(|u_k|^2_{H^s(\Omega)} + \left|\left(\dd\mathfrak{u}\right)_k\right|^2_{H^s(\Omega)} + |f_k|^2_{H^s(\Omega)} \right)
  \end{gather*} for some $C \geq 0$ independent of $h$ and $r_k = \max\left\{ \lceil \frac{n - k}{2} + \varepsilon \rceil, \lceil \frac{k}{2} + \varepsilon \rceil \right\}$ for any $0 < \varepsilon < 1$.
\end{proposition}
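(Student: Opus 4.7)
The plan is to combine \Cref{theorem:norme} and \Cref{lemma:pij_difference}, treating the proposition as a corollary: \Cref{theorem:norme} controls the cochain error by three terms of the form $\normm{(\Pi - J)(\cdot)}$, and \Cref{lemma:pij_difference} converts each such term into the Sobolev seminorms that appear in the final bound. The regularity hypothesis $\mathfrak{u}\in C^{r+1}\Lambda(\Omega)$ and $\mathfrak{f}\in C^r\Lambda(\Omega)$ is chosen precisely so that all three arguments $\mathfrak{u}$, $\dd\mathfrak{u}$, $\mathfrak{f}$ lie in $H^{r_k}$ on each component: for every $k$ we have $r_k \leq \lceil n/2 + \varepsilon\rceil = r$, and continuous differentiability up to order $r$ on the bounded Lipschitz domain $\Omega$ embeds into $H^r(\Omega)$.

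The first step is to check the hypotheses of \Cref{theorem:norme}. Because $\mathfrak{u}\in C^{r+1}\Lambda$ we get $\dd\mathfrak{u}\in C^{r}\Lambda$, so $\Pi$ and $J$ are well-defined on $\mathfrak{u}$, $\dd\mathfrak{u}$ and $\mathfrak{f}$ (they require only existence of suitable traces on $k$- and $(n-k)$-simplices, which holds for continuous forms), and the commuting diagrams of \Cref{lemma:pid} and \Cref{lemma:jdelta} apply. Thus \Cref{theorem:norme} gives
\begin{gather*}
  \normm{e} + \normm{\DECd e} \leq C\bigl(\normm{(\Pi - J)\mathfrak{u}} + \normm{(\Pi - J)\dd\mathfrak{u}} + \normm{(\Pi - J)\mathfrak{f}}\bigr).
\end{gather*}

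The second step is to square this estimate. Using $(a+b+c)^2 \leq 3(a^2+b^2+c^2)$ and the trivial bound $\normm{e}^2 + \normm{\DECd e}^2 \leq (\normm{e} + \normm{\DECd e})^2$, I obtain
\begin{gather*}
  \normm{e}^2 + \normm{\DECd e}^2 \leq 3C^2\bigl(\normm{(\Pi - J)\mathfrak{u}}^2 + \normm{(\Pi - J)\dd\mathfrak{u}}^2 + \normm{(\Pi - J)\mathfrak{f}}^2\bigr).
\end{gather*}

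The third step is to apply \Cref{lemma:pij_difference} to each of the three terms on the right-hand side, once with argument $\mathfrak{u}$, once with $\dd\mathfrak{u}$ and once with $\mathfrak{f}$. Summing the three resulting bounds, absorbing constants, and noting that all three contributions have the same $h^{2s}$-weighted seminorm structure yields exactly the claimed estimate.

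No step is a genuine obstacle: the analytic content sits entirely in the preceding theorem and lemma, and the proof is a clean assembly of them, the only care being the verification that the assumed $C^{r+1}$/$C^r$ regularity both makes all operators well-defined and controls the Sobolev seminorms $|\mathsf{u}_k|_{H^s}$, $|(\dd\mathfrak{u})_k|_{H^s}$, $|\mathsf{f}_k|_{H^s}$ up to order $s = r_k$ appearing on the right-hand side.
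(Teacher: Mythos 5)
Your proposal is correct and follows essentially the same route as the paper: invoke \Cref{theorem:norme}, square the resulting bound using an elementary inequality of the form $(r+s+t)^2 \le \text{const}\cdot(r^2+s^2+t^2)$, and then apply \Cref{lemma:pij_difference} to each of the three $(\Pi-J)$ terms. The additional verification you give that the $C^{r+1}/C^r$ regularity makes $\Pi$ and $J$ well-defined and controls the $H^{r_k}$ seminorms is a welcome bit of care that the paper leaves implicit.
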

\begin{proof} 
    Let $p = \normm{e}, q = \normm{\DECd e}, r = \normm{(\ipop - J) \mathfrak{u}}, s = \normm{(\ipop - J) \dd\mathfrak{u}}, t = \normm{(\ipop - J)\mathfrak{f}}$, then the estimate in \Cref{theorem:norme} says
    \[
        p + q \leq C(r + s + t).
    \] By (repeated application of) Young's inequality and because $p, q \geq 0$, we have
    \begin{align*}
        p^2 + q^2 &\leq (p + q)^2 \leq C^2(r + s + t)^2 \leq 2C^2(r^2 + (s + t)^2) \leq 4C^2(r^2 + s^2 + t^2).
    \end{align*}
    The statement then follows from applying \Cref{lemma:pij_difference} to the terms $r^2, t^2$ and $s^2$.
\end{proof}

\begin{remark}
  The approach pursued in the present paper follows \cite{GUP25} very closely, but a different route
  could have been taken to prove convergence, at least in 2D. \cite{Zhu2025} uses a
  very close relationship between the inner products from FEEC and DEC (on suitable
  meshes) to show that the consistency gap between FEEC and DEC solutions for the
  Hodge-Laplacian decreases with the mesh-width, then convergence of FEEC implies
  convergence of DEC. The advantage of that approach is that we do not have to assume
  that the solution enjoys the high regularity stipulated in \Cref{thm:43}.
\end{remark}

\section{Numerical Tests in Two Dimensions}

We employ the method of manufactured solutions to empirically verify the order of
convergence obtained in \Cref{thm:43}. We measure DEC norms of the discretization error
$e:=\ipop\mathfrak{u}-\mathbf{u}$, $\mathbf{u}$ solution of \eqref{eq:Dbvp}, where
$\ipop\mathfrak{u}$ and $\ipop\mathfrak{f}$ are computed by ``overkill quadrature'', which
means that the quadrature error is negligible compared to the discretization error. We
monitor two error norms: When we talk about the DEC $L^2$-norm we mean $\normm{e}$, and by
the DEC $H\Lambda$-norm we mean $\normm{e} + \normm{\DECd e}$.

The implementation of the DEC scheme relied on the C++ finite element library
\texttt{MFEM} \cite{mfem}. The concrete code used for carrying out the tests can be
found at \url{https://github.com/rdabetic/2d_dec_dirac}.

\subsection{Test I}
We consider the unit square $\Omega = [0, 1]^2$ and fix the right-hand-side $\mathfrak{f}$
such that we obtain a smooth solution of
\eqref{eq:strong_form}, which reads 
\[
  u_0 = \sin 2\pi x \sin 2 \pi y, \quad u_1 = (\sin 2\pi y, \sin 2\pi x)^T, \quad u_2 = \cos 2\pi x \sin 2 \pi y
\] in Euclidean vector proxies. \\
The coarsest mesh that was used is displayed in \Cref{fig:square_msh}. It was refined
several times using regular refinement, i.e.\ connecting the midpoints of the edges to
split each triangle into four smaller ones.

The resulting error norms are plotted in \Cref{fig:square_cvg}, and we observe first-order
convergence, exactly the order of convergence predicted by \Cref{thm:43}.

\begin{figure}[h]
    \centering
    \begin{subfigure}{0.49\textwidth}
        \includegraphics[width=\textwidth]{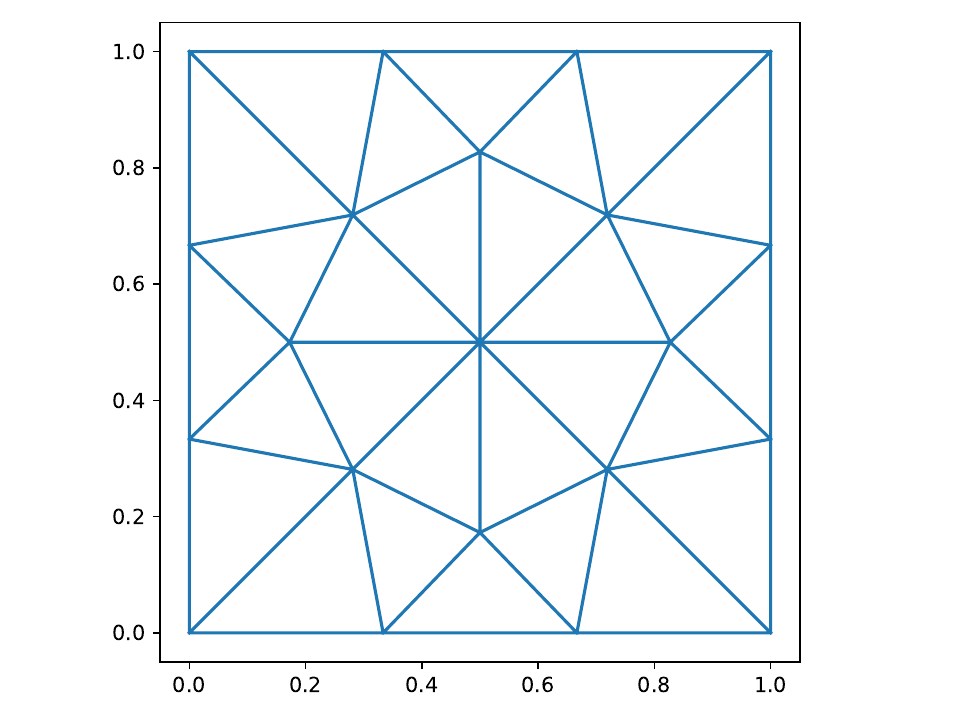}
        \caption{Test I: Coarsest mesh}
        \label{fig:square_msh}
    \end{subfigure}
    \begin{subfigure}{0.49\textwidth}
        \includegraphics[width=\textwidth]{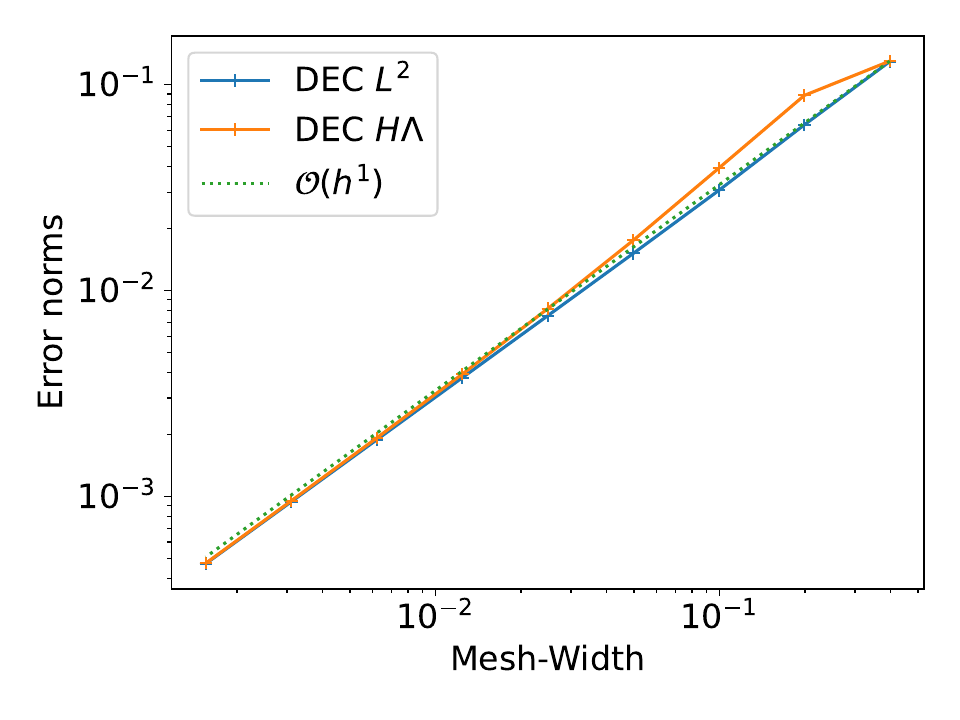}
        \caption{Test I: Error norms}
        \label{fig:square_cvg}
    \end{subfigure}
    \caption{Mesh and convergence of DEC on a square.}
\end{figure}
\subsection{Test II}
Similar to \cite{GUP25}, we tested the DEC discretization on a triangle as well. As a
domain $\Omega\subset\R^2$ we chose an equilateral triangle with vertices at $(0, 0)$,
$(0, 1)$, and $\left(\nicefrac{1}{2}, \nicefrac{\sqrt 3}{2}\right)$.
We fix the right-hand-side such that we obtain the exact solution (in Euclidean vector proxies) 
\[
    u_0 = 2^{15}\left(\lambda_0\lambda_1\lambda_2\right)^3, \quad u_1 = (u_0, u_0)^T, \quad u_2 = u_0 - \frac{1}{|\Omega|}\int_\Omega u_0(x, y)\ \dd x \dd y,
  \] where $\lambda_i$ denotes the barycentric coordinate function associated with vertex $i$. \\
  As before, we used successive regular refinement of a coarse mesh, which can be seen in
  \Cref{fig:tria_mesh}, to generate a sequence of meshes with decreasing mesh-width. Note
  that the refined meshes only contain equilateral triangles.

  The plot of \Cref{fig:tria_cvg} clearly reveals that for $h \to 0$ the error norms
  decrease faster than expected. The better-than-expected order of convergence is most likely due to the
  symmetry of the mesh (all equilateral triangles), as explained in \cite[Section
  6]{GUP25}, where the authors provide improved error estimates on $\ipop - J$ in such a
  case. Concretely, \cite[Equation 6.2 \& Proposition 6.2]{GUP25} establish second order
  convergence, which is what is observed.
\begin{figure}[h]
    \centering
    \begin{subfigure}{0.49\textwidth}
        \includegraphics[width=\textwidth]{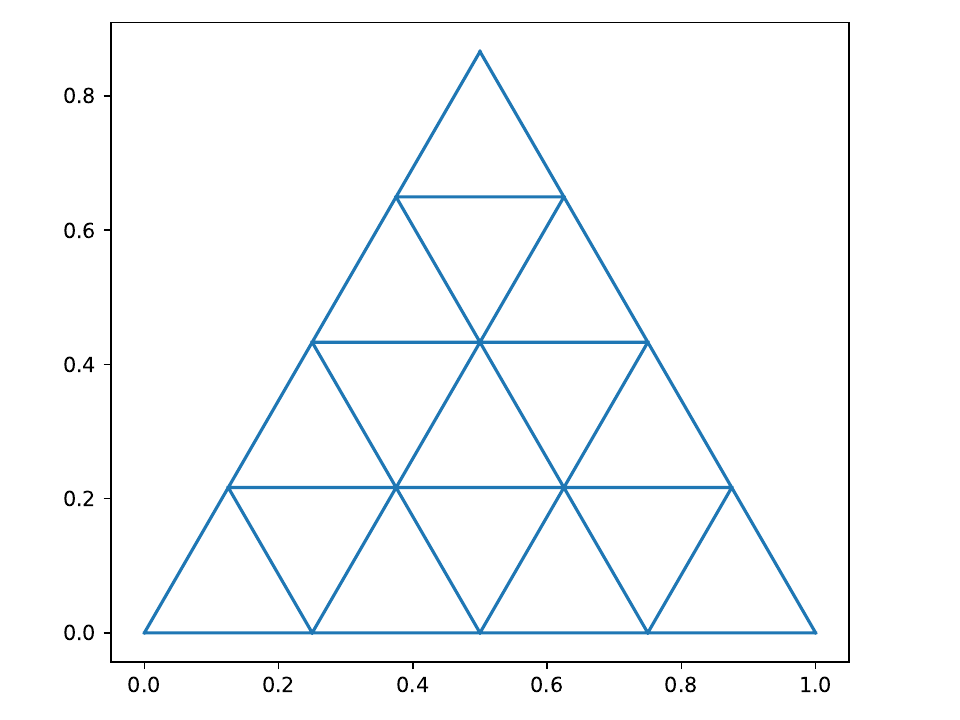}
        \caption{Test II: Coarsest mesh}
        \label{fig:tria_mesh}
    \end{subfigure}
    \begin{subfigure}{0.49\textwidth}
        \includegraphics[width=\textwidth]{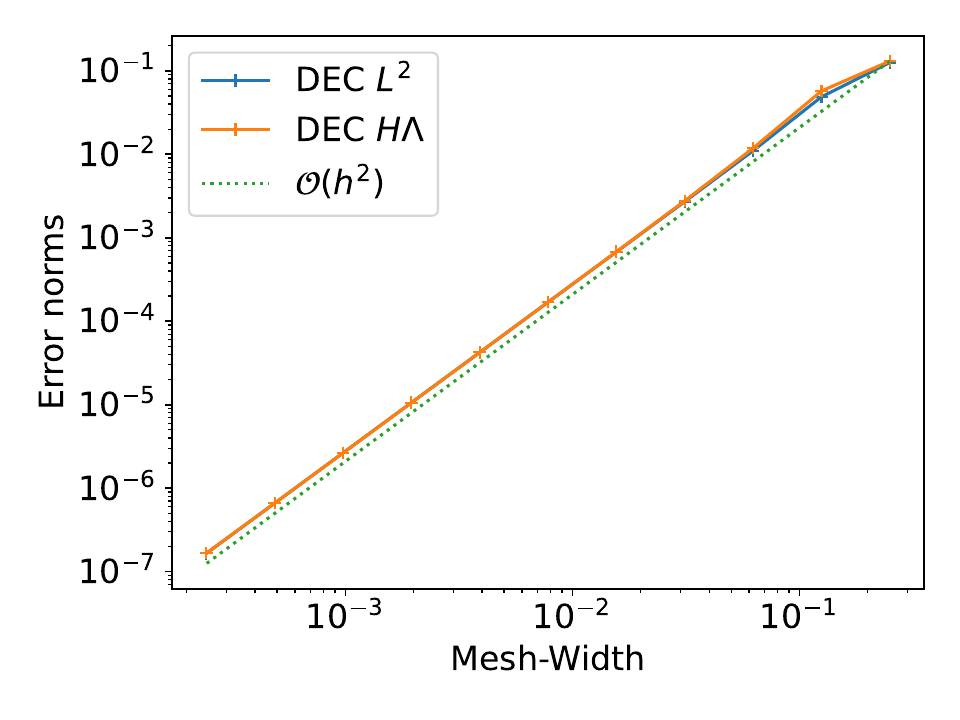}
        \caption{Test II: Error norms}
        \label{fig:tria_cvg}
    \end{subfigure}
    \caption{Mesh and convergence of DEC on an equilateral triangle with a structured mesh.}
\end{figure}

\subsection{Test III}

\begin{figure}[H]
    \centering
    \begin{subfigure}{0.49\textwidth}
        \includegraphics[width=\textwidth]{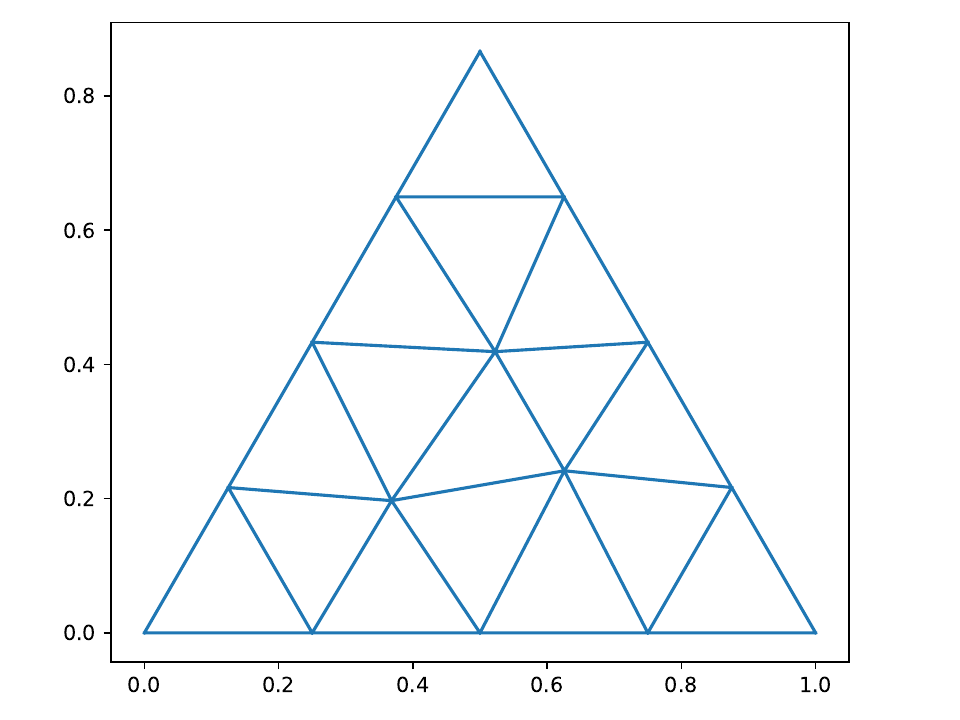}
        \caption{Test III: Coarsest mesh}
        \label{fig:tria_pert_mesh}
    \end{subfigure}
    \begin{subfigure}{0.49\textwidth}
        \includegraphics[width=\textwidth]{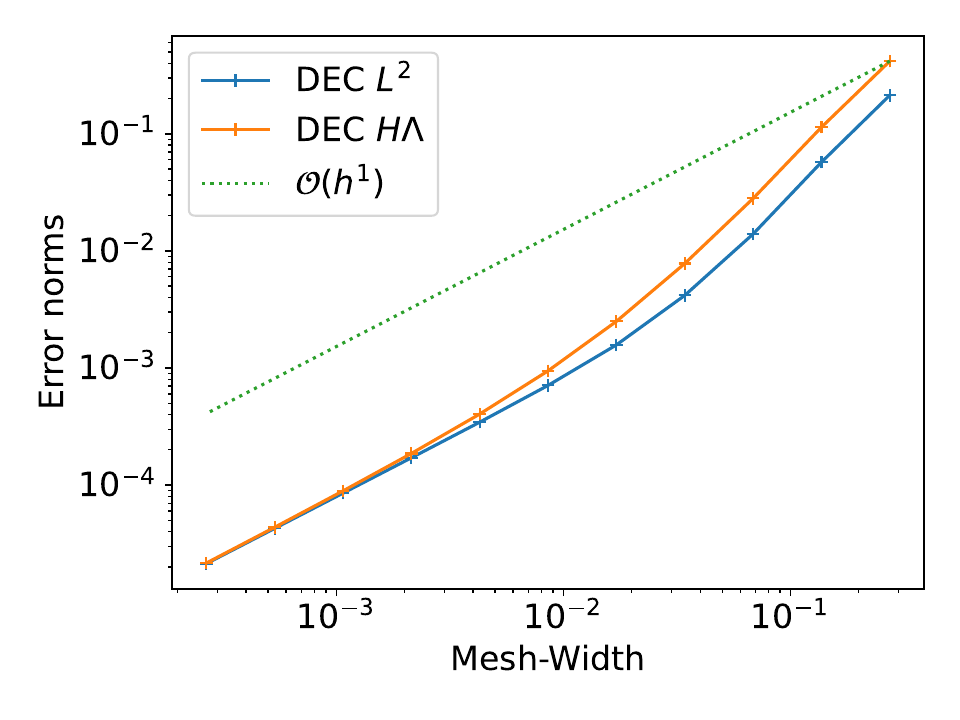}
        \caption{Test III: Error norms}
        \label{fig:tria_cvg_pert}
    \end{subfigure}
    \caption{Mesh and convergence of DEC on an equilateral triangle with a perturbed mesh.}
  \end{figure}
  
  This numerical experiment is inspired by the "Perturbed Mesh" computation in \cite[Section~7]{GUP25}. The setup is the same as in Test II, but now we start with a
  slightly perturbed coarse mesh of the triangle domain $\Omega$, see
  \Cref{fig:tria_pert_mesh}.  This breaks symmetries, the theory from
  \cite[Section~6]{GUP25} no longer applies and, as one can see from
  \Cref{fig:tria_cvg_pert}, now convergence of error norms appears to be first order,
  albeit with some pre-asymptotic behavior.
  
\section*{Declarations}
The authors have no conflict of interest to declare that are relevant to the content of this article.

\bibliographystyle{plainnat}
\bibliography{sources}
\end{document}